\DeclareMathOperator{\tors}{tors}
\DeclareMathOperator{\coker}{coker}
\DeclareMathOperator{\imagen}{Im} 
\begin{document}

\newtheorem{theorem}{Theorem}
\newtheorem{corollary}[theorem]{Corollary}
\newtheorem{conjecture}[theorem]{Conjecture}
\newtheorem{lemma}[theorem]{Lemma}
\newtheorem{proposition}[theorem]{Proposition}
\newtheorem{axiom}{Axiom}[section]
\newtheorem{exercise}{Exercise}[section]
\newtheorem{problem}{Problem}[section]
\newtheorem{definition}[theorem]{Definition}

\newcommand{\cC}{{\mathcal{C}}}
\newcommand{\cO}{{\mathcal{O}}}
\newcommand{\Q}{{\mathbb{Q}}}
\newcommand{\Z}{{\mathbb{Z}}}
\newcommand{\bP}{{\mathbb{P}}}

\title[Torsion of rational elliptic curves over quadratic fields]{Torsion of rational elliptic curves\\ over quadratic fields}

\author{Enrique Gonz\'alez--Jim\'enez}
\address{Universidad Aut{\'o}noma de Madrid, Departamento de Matem{\'a}ticas and Instituto de Ciencias Matem{\'a}ticas (ICMat), Madrid, Spain}
\email{enrique.gonzalez.jimenez@uam.es}
\urladdr{http://www.uam.es/enrique.gonzalez.jimenez}
\author{Jos\'e M. Tornero}
\address{Departamento de \'Algebra, Universidad de Sevilla. P.O. 1160. 41080 Sevilla, Spain.}
\email{tornero@us.es}
%%%%%%%%%%%%%%%%%%%%%%%%%%%%%%%%%%%%%%%%%%%
\thanks{The first author was partially  supported by the grant MTM2012--35849. The second author was partially  supported by the grants  FQM--218 and P08--FQM--03894, FSE and FEDER (EU)}
%%%%%%%%%%%%%%%%%%%%%%%%%%%%%%%%%%%%%%%%%%%
\subjclass[2010]{Primary: 11G05,11R11; Secondary:14G05}
\keywords{Elliptic curves, Torsion subgroup, rationals, quadratic fields.}

%\date{\today\,\,:\,\,\currenttime}
%\date{\today}

\begin{abstract}
Let $E$ be an elliptic curve defined over $\Q$. We study the relationship between the torsion subgroup $E(\Q)_{\tors}$ and the torsion subgroup $E(K)_{\tors}$, where $K$ is a quadratic number field. 
\end{abstract}
\maketitle

\section{Introduction}

Let $E$ be an elliptic curve defined over a number field $K$. The Mordell-Weil Theorem states that the set of $K$-rational points, $E(K)$, is a finitely generated abelian group. So it can be written as $E(K) \simeq E(K)_{\tors} \oplus \Z^r$, for some non-negative integer $r$ (rank of $E(K)$) and some finite torsion subgroup $E(K)_{\tors}$. It is well known that there exist two positive integers $n,m$ such that $E(K)_{\tors}$ is isomorphic to $\cC_n \times \cC_m$, where $\cC_n$ is the cyclic group of order $n$ \cite{Silverman}.

This paper focuses on a particular problem concerning the torsion part, which we will explain now. We define sets $S(d)$ and $\Phi(d)$ as follows:
\begin{itemize}
\item $S(d)$ is the set of primes that can appear as the order of a torsion point of an elliptic curve $E$ defined over a number field of degree $d$.
\item $\Phi(d)$ is the set of possible groups that can appear as the torsion subgroup of an elliptic curve defined over a certain number field $K$ of degree $d$. 
\end{itemize}

Mazur's landmark papers \cite{Mazur1,Mazur2} established that $S(1)=\{2,3,5,7\}$ and
$$
\Phi(1) = \left\{ \cC_n \; | \; n=1,\dots,10,12 \right\} \cup \left\{ \cC_2 \times \cC_{2m} \; | \; m=1,\dots,4 \right\}.
$$

After this, a long series of papers by Kenku, Momose and Kamienny ending in \cite{Kamienny, KenkuMomose} slowly unfolded the quadratic case to finally reach a full description of $S(2)=\{2,3,5,7,11,13\}$ and $\Phi(2)$:
\begin{eqnarray*}
\Phi(2) &=& \left\{ \cC_n \; | \; n=1,\dots,16,18 \right\} \cup \left\{ \cC_2 \times \cC_{2m} \; | \; m=1,\dots,6 \right\}  \cup \\
&& \left\{ \cC_3 \times \cC_{3r} \; | \; r=1,2 \right\} \cup \left\{ \cC_4 \times \cC_4 \right\}.  
\end{eqnarray*}

We do not have, as of today, such a precise description of $\Phi(d)$ for $d \geq 3$ although work by Parent \cite{Parent} has obtained $S(3)$ and Derickx, Kamienny, Stein and Stoll have announced \cite{Derickx} that they have established the sets $S(d)$ for $d=4,5$. A fundamental result here is the celebrated Uniform Boundness Theorem, a long--standing conjecture finally proved by Merel \cite{Merel}, which states that there exists a constant $B(d)$ such that $|G| \leq B(d)$, for all $G \in \Phi(d)$. Although Merel's proof was not explicit, further versions (an Oesterl\'e's 1994 unpublished paper and Parent \cite{Parent2}) have given precise values for $B(d)$.

\

Let us fix some useful notations:

\begin{itemize}
\item Let $E$ be an elliptic curve defined over a number field. Without loss of generality we can assume $E$ is defined by a short Weierstrass form
$$
E: Y^2 = X^3 + AX + B; \quad A,B \in K,
$$
and we will write,
$$
E(K) = \left\{ (x,y) \in K^2 \; | \; y^2=x^3+Ax+B \right\}\cup \{\cO\},
$$
the set of $K$--rational points of $E$, and $\cO$ its point at infinity.
\item Let $S_\Q(d)$ be the set of primes that can appear as the order of a torsion point defined over a number field of degree $d$, on an elliptic curve $E$ defined over the rationals.
\item Let $\Phi_\Q(d)$ be the set of possible groups that can appear as the torsion subgroup over a number field of degree $d$, of an elliptic curve $E$ defined over the rationals.
\item For an elliptic curve $E$, let $\Delta_E$ be, as customary, its discriminant.
\item For an elliptic curve $E$ and an integer $n$, let $E[n]$ be the subgroup of all points whose order is a divisor of $n$ (over $\overline{\Q}$), and let $E(K)[n]$ be the set of points in $E[n]$ with coordinates in $K$, for any number field $K$ (including the case $K=\Q$).
\item Under the same conditions, let $\Q(E[n])$ be the extension generated by all the coordinates of points in $E[n]$.
\item For an elliptic curve $E$ defined over the rationals given by a short Weierstrass equation $E:Y^2=X^3+AX+B$, and a square--free integer $D$, let $E_D$ denote its quadratic twist. That is, the elliptic curve with a Weierstrass  equation $E_D:DY^2=X^3+AX+B$.
\end{itemize}

Please mind that, in the sequel, for examples and precise curves we will use the Antwerp--Cremona tables and labels \cite{antwerp,cremonaweb}.

The set $S_{\mathbb Q}(d)$ is known for $d \le 42$, and there is a conjectural description by Lozano--Robledo \cite{Lozano} which encompasses both the known cases and experimental data. For example, $S_{\mathbb Q}(1)=S_{\mathbb Q}(2)=\{2,3,5,7\}$. The sets $\Phi_{\mathbb Q}(d)$ have been completely described by Najman \cite{Najman} for $d = 2, 3$. Concretely, he proved:
\begin{eqnarray*}
\Phi_{\mathbb Q}(2) &=& \left\{ \cC_n \; | \; n=1,\dots,10,12,15,16 \right\} \cup \left\{ \cC_2 \times \cC_{2m} \; | \; m=1,\dots,6 \right\} \cup \\
&& \left\{ \cC_3 \times \cC_{3r} \; | \; r=1,2 \right\} \cup \left\{ \cC_4 \times \cC_4 \right\},  \\
\Phi_{\mathbb Q}(3) &=& \left\{ \cC_n \; | \; n=1,\dots,10,12,13,14,18,21 \right\} \cup \left\{ \cC_2 \times \cC_{2m} \; | \; m=1\dots,4,7 \right\}.
\end{eqnarray*}

The remarkable fact is that although the set $\Phi(3)$ is still unknown, Najman has determined the set $\Phi_{\mathbb Q}(3)$. 

It is also worth noting that Fujita \cite{Fujita} has explicitely determined the torsion subgroups over the maximal elementary $2$--extension of $\Q$ (that is $\Q \left(\{\sqrt{m} \; | \; m \in \Z \}\right)$ that may arise from an elliptic curve defined over $\Q$. This classification might be of great help in the open problems that we will pose later.

\begin{definition}
Let $G \in \Phi(1)$. We will write $\Phi_\Q(d,G)$ the set of possible groups that can appear as the torsion subgroup over a certain number field $K$ of degree $d$, of an elliptic curve $E$ defined over the rationals, such that $E(\Q)_{\tors}=G$.
\end{definition}

Our aim in this paper is, first to compute and then to understand better, $\Phi_\Q(2,G)$. That is, the behaviour of a particular torsion group of $\Phi(1)$ when we enlarge the base field $\Q$ by means of a quadratic extension.

In order to guess what $\Phi_\Q(2,G)$ may look like, we carried out an exhaustive computation, obtaining the groups $E(K)_{\tors}$, for all curves $E$ defined over the rationals with conductor less than $300000 $ from \cite{cremonaweb}. The main result of this paper is the following:

\begin{theorem}\label{main}
For $G \in \Phi(1)$, the set $\Phi_\Q(2,G)$ is the following:
$$
\begin{array}{|c|c|}
\hline
G & \Phi_\Q \left(2,G \right)\\
\hline
\cC_1 & \left\{ \cC_1\,,\, \cC_3 \,,\, \cC_5\,,\, \cC_7\,,\, \cC_9 \right\} \\
\hline
\cC_2 & \left\{ \cC_2\,,\, \cC_4 \,,\, \cC_6\,,\, \cC_8\,,\, \cC_{10}\,,\, \cC_{12}\,,\, \cC_{16}\,,\, \cC_2 \times \cC_{2}\,,\, \cC_2 \times \cC_{6}\,,\, \cC_2 \times \cC_{10} \right\} \\ 
\hline
\cC_3 & \left\{ \cC_3, \; \cC_{15}, \; \cC_3 \times \cC_3 \right\} \\
\hline
\cC_4 & \left\{ \cC_4 \,,\,\cC_8\,,\, \cC_{12}\,,\, \cC_2 \times \cC_{4}\,,\, \cC_2 \times \cC_{8}\,,\, \cC_2 \times \cC_{12}\,,\, \cC_4 \times \cC_4\right\} \\
\hline
\cC_5 & \left\{ \cC_5, \; \cC_{15} \right\} \\
\hline
\cC_6& \left\{ \cC_6, \; \cC_{12}, \; \cC_2 \times \cC_6, \; \cC_3 \times \cC_6 \right\} \\
\hline
\cC_7 & \left\{ \cC_7 \right\} \\
\hline
\cC_8 & \left\{ \cC_8, \; \cC_{16}, \; \cC_2 \times \cC_8 \right\} \\
\hline
\cC_9 & \left\{ \cC_9 \right\} \\
\hline
\cC_{10} & \left\{ \cC_{10}, \; \cC_2 \times \cC_{10} \right\} \\
\hline
\cC_{12} & \left\{ \cC_{12}, \; \cC_2 \times \cC_{12} \right\} \\
\hline
\cC_2 \times \cC_2 & \left\{ \cC_2 \times \cC_{2}\,,\, \cC_2 \times \cC_{4}\,,\, \cC_2 \times \cC_{6}\,,\, \cC_2 \times \cC_{8}\,,\,  \cC_2 \times \cC_{12}\right\} \\
\hline
\cC_2 \times \cC_4 & \left\{ \cC_2 \times \cC_4, \; \cC_2 \times \cC_8, \; \cC_4 \times \cC_4 \right\} \\
\hline
\cC_2 \times \cC_6 & \left\{ \cC_2 \times \cC_6, \cC_2 \times \cC_{12} \right\} \\
\hline
\cC_2 \times \cC_8 & \left\{ \cC_2 \times \cC_8 \right\} \\
\hline
\end{array}
$$
\end{theorem}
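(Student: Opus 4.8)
The plan is to prove the two inclusions separately for each $G$: that every group in the tabulated set $\Phi_\Q(2,G)$ actually occurs (realizability), and that no other group does (necessity). Realizability is the benign direction. Since $\Phi_\Q(2)$ is already a finite list and each candidate is realized by curves of small conductor, it suffices to exhibit, for each target entry, one explicit $E/\Q$ with $E(\Q)_{\tors}=G$ together with a quadratic field $K$ reaching that group, and the exhaustive search over \cite{cremonaweb} supplies these. The content lies in necessity, and here the decisive simplification is that the ambient set is already pinned down: by Najman's determination of $\Phi_\Q(2)$ we know $E(K)_{\tors}$ lies in an explicit finite list, by $S_\Q(2)=\{2,3,5,7\}$ only the primes $2,3,5,7$ can intervene, and $E(\Q)_{\tors}=G$ injects into $E(K)_{\tors}$. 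Thus the problem collapses to deciding, for each pair $(G,H)$ with $G\hookrightarrow H$ and $H\in\Phi_\Q(2)$, whether the pair is arithmetically realizable.

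The engine for necessity is the quadratic twist. Writing $K=\Q(\sqrt{D})$ with nontrivial automorphism $\sigma$, the $\pm$-eigenspace decomposition of $E(K)$ under $\sigma$ identifies the fixed part with $E(\Q)$ and the anti-fixed part, through the twisting isomorphism defined over $K$, with $E_D(\Q)$. Away from $2$ this is a genuine splitting, so for every odd $m$ one has
$$E(K)[m]\;\cong\;E(\Q)[m]\,\oplus\,E_D(\Q)[m],$$
while the shared $2$-division field forces $E(\Q)[2]=E_D(\Q)[2]$. I would exploit this as follows. For the odd part, both $E(\Q)_{\tors}$ and $E_D(\Q)_{\tors}$ range over $\Phi(1)$, their odd primary parts add directly, and the result must still lie in $\Phi_\Q(2)$; this single constraint already forces that an odd factor $\cC_9$ admits no further odd growth (as $\cC_3\times\cC_9\notin\Phi_\Q(2)$), that two independent $3$-torsions can coexist only as $\cC_3\times\cC_3$, and that $5$- and $7$-torsion are gained only cyclically, since a twist carrying a \emph{second} rational point of order $p$ would require the mod-$p$ cyclotomic character to be quadratic, i.e.\ $p-1\mid 2$, which fails for $p=5,7$. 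Moreover this disposes of every $G$ with no rational $2$-torsion: there the cubic $X^3+AX+B$ is irreducible over $\Q$ and, because $\gcd(2,3)=1$, remains irreducible over the quadratic field $K$, so no $2$-torsion is ever acquired and only the odd mechanism operates.

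The genuine difficulty is the $2$-primary part when $G$ already contains a rational point of order $2$. Here the twist relation is no longer a clean direct sum: the natural map $E(\Q)_{\tors}\oplus E_D(\Q)_{\tors}\to E(K)_{\tors}$ has kernel and cokernel that are elementary abelian $2$-groups, so it pins down the order of $E(K)[2^\infty]$ and forces $2E(K)[2^\infty]$ into the image, but it does not by itself determine the group structure. Indeed the abstract bookkeeping is too permissive: it would allow $\cC_2\rightsquigarrow\cC_2\times\cC_4$ (a rank increase) or $\cC_4\rightsquigarrow\cC_{16}$ (an exponent increase), both of which are \emph{absent} from the table. Ruling these out is the crux, and I expect it to be the main obstacle.

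To overcome it I would split the offending configurations into two types. For the rank-increasing cases, acquiring full $2$-torsion over $K$ forces $K=\Q(\sqrt{\Delta_E})$, while the extra $2$-power point lives over the quadratic field cut out by $E_D$; the point to verify is that these two quadratic fields cannot coincide in the forbidden entries, so full $2$-torsion and the higher point cannot be gained simultaneously over a single $K$. For the exponent-increasing cases, where the cokernel would have to raise the $2$-power, I would instead descend to the finitely many curves realizing the large or sporadic groups of $\Phi_\Q(2)$ — notably $\cC_{16}$, $\cC_4\times\cC_4$, $\cC_2\times\cC_{12}$ and the odd sporadics $\cC_{15}$, $\cC_3\times\cC_3$ — using Najman's and Fujita's explicit classifications to describe the relevant $j$-invariants and Galois images, and read off $E(\Q)_{\tors}$ directly for each. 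Checking these finitely many source curves, together with the field-coincidence argument, is what I expect to consume most of the work and to complete the necessity direction.
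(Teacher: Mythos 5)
Your overall architecture matches the paper's: existence by explicit curves from the conductor search, non-existence by combining the quadratic-twist decomposition (the paper's Theorem \ref{teo3} and Corollary \ref{cor4}) with the irreducibility of the $2$-division polynomial and ad hoc arguments for the $2$-primary ``crux'' cases. Several of your steps are sound and in one place arguably cleaner than the paper: deducing $\cC_3\subset G$ from $\cC_3\times\cC_3\subset H$ directly via $E(K)[3]\simeq E(\Q)[3]\times E_D(\Q)[3]$ (since $\cC_3\times\cC_3\notin\Phi(1)$) replaces the paper's appeal to Palladino, and your Weil-pairing remark correctly confines new full $p$-torsion to $p\le 3$. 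Your plan for the $\cC_2\times\cC_4$ obstruction is also the right one in contrapositive form: the paper's Theorem \ref{teo}(iii) shows by explicit Weierstrass computation that if $K=\Q(\sqrt{\Delta_E})$ and $E(K)$ has a point of order $4$, then $E$ already has a rational point of order $4$; you state this as a verification to be done rather than doing it, but the idea is correct.

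There are, however, two genuine gaps. First, your treatment of the exponent-raising case $\cC_4\rightsquigarrow\cC_{16}$ rests on ``descending to the finitely many curves realizing'' $\cC_{16}$ (and likewise $\cC_4\times\cC_4$, $\cC_2\times\cC_{12}$), but these sets are not finite: $X_1(16)$ is a genus-$2$ hyperelliptic curve with infinitely many quadratic points, and the paper's own Table \ref{table2} already exhibits $22$ rational curves of conductor below $300000$ acquiring $\cC_{16}$ over a quadratic field. Only $\cC_{15}$ admits the finite enumeration you describe (Najman's four cases, which is exactly Theorem \ref{teo}(vi)). A parametric-family argument via \cite{Fujita} might be salvageable, but as stated this step fails; the paper instead kills $\cC_4\rightsquigarrow\cC_{16}$ structurally, by noting that $\ker(\Psi)\subset E(K)[2]$ and $E_D(\Q)_{\tors}$ would be forced to be $\cC_8$, whence $\coker(\Psi)$ would contain an element of order $4$, contradicting that $\coker(\Psi)$ is killed by $2$. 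Second, your framework does not actually dispose of the non-cyclic $G$: for instance, excluding $\cC_2\times\cC_{12}$ from $\Phi_\Q(2,\cC_2\times\cC_4)$ is not a consequence of the odd decomposition together with $E(\Q)[2]=E_D(\Q)[2]$, since $E_D(\Q)_{\tors}\supset\cC_2\times\cC_6$ is a perfectly admissible member of $\Phi(1)$; the same applies to excluding $\cC_4\times\cC_4$ from $\Phi_\Q(2,\cC_2\times\cC_2)$. The paper outsources the entire non-cyclic column to Kwon's classification (\ref{eqkwon}) \cite{Kwon}, and some substitute for that input is needed in your argument.
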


\section{The set $\Phi_\Q(2,G)$}

A very important and partial result of our problem, concretely the case of $G$ being non--cyclic, has already been completely solved by Kwon \cite{Kwon}. More precisely, the result goes as follows:
\begin{equation}\label{eqkwon}
\Phi_\Q(2,\cC_2 \times \cC_{2n})=
\left\{
\begin{array}{lcl}
\left\{ \cC_2 \times \cC_{2m} \; | \; m=1,2,3,4,6 \right\} & & \mbox{if $n=1$},\\
 \left\{ \cC_2 \times \cC_{2m} \; | \; m=2,4 \right\} \cup \{\cC_4 \times \cC_{4}\}& & \mbox{if $n=2$},\\
 \left\{ \cC_2 \times \cC_{2m} \; | \; m=3,6 \right\} & & \mbox{if $n=3$},\\
 \left\{ \cC_2 \times \cC_{8}  \right\}  & & \mbox{if $n=4$}.
\end{array}
\right.
\end{equation}

The last three cases were carefully detailed, including necessary and sufficient conditions on the coefficients of a short Weierstrass equation of the elliptic curve and the discriminant of the quadratic field, to determine which of the possible groups actually happened for a given case. The first case, however, lacked of such characterisation and it is determined by ruling the other possibilities out. This appears to be a common fact in the study of torsion subgroups: the simpler the structure is, the more difficult becomes to describe in effective terms (see, for instance, \cite{GGT}).

The next auxiliary result is a particular case of Lemma 1.1 in \cite{LaskaLorenz}. However, we need some details from the proof (see \cite{Chahal}).

\begin{theorem}\label{teo3}
Let $E$ be an elliptic curve defined over $\Q$, $D$ a square-free integer and $K = \Q ( \sqrt{D} )$. There exists a pair of homomorphisms
$$
E(K) \stackrel{\Psi}{\longrightarrow} E(\Q) \times E_D(\Q) \stackrel{\overline{\Psi}}{\longrightarrow} E(K)
$$
such that $\overline{\Psi}\circ \Psi=[2]$ and $\Psi\circ \overline{\Psi}=[2]\times [2]$. 

Moreover, $\ker (\Psi) \subset E(K)[2]$, $\ker (\overline{\Psi}) \subset E(\Q)[2] \times E_D(\Q)[2]$ and $\coker(\Psi)$, $\coker(\overline{\Psi})$ are groups where every non--zero element has order $2$.
\end{theorem}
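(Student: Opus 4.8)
The plan is to construct the maps $\Psi$ and $\overline{\Psi}$ explicitly using the theory of quadratic twists, and then verify the compositional identities and the structural claims about kernels and cokernels by direct computation. Let me sketch the geometric picture first. The curve $E_D: DY^2 = X^3 + AX + B$ becomes isomorphic to $E$ over $K = \Q(\sqrt{D})$ via the map $(x,y) \mapsto (x, \sqrt{D}\,y)$; call this isomorphism $\phi: E_D \to E$ defined over $K$. The Galois group $\mathrm{Gal}(K/\Q) = \{1,\sigma\}$ acts on $E(K)$, and the twisting relation means that $\sigma$ acts on points while $\phi$ intertwines this action with the sign change on the twist.

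First I would define $\Psi: E(K) \to E(\Q) \times E_D(\Q)$ by sending a point $P \in E(K)$ to the pair consisting of its trace and a twisted trace. Concretely, set $\Psi(P) = \left(P + P^\sigma,\ \phi^{-1}(P - P^\sigma)\right)$, where $P^\sigma$ denotes the Galois conjugate of $P$. The point $P + P^\sigma$ is fixed by $\sigma$, hence lies in $E(\Q)$; and $P - P^\sigma$ satisfies $(P - P^\sigma)^\sigma = -(P - P^\sigma)$, which is precisely the condition for $\phi^{-1}(P - P^\sigma)$ to be $\Q$-rational on the twist $E_D$. In the reverse direction I would define $\overline{\Psi}: E(\Q) \times E_D(\Q) \to E(K)$ by $\overline{\Psi}(Q, R) = Q + \phi(R)$, using that both $Q \in E(\Q) \subset E(K)$ and $\phi(R) \in E(K)$.

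Next I would compute the two compositions. For $\overline{\Psi} \circ \Psi$, applying the definitions gives $\overline{\Psi}(\Psi(P)) = (P + P^\sigma) + \phi(\phi^{-1}(P - P^\sigma)) = (P + P^\sigma) + (P - P^\sigma) = [2]P$, which establishes $\overline{\Psi} \circ \Psi = [2]$. For $\Psi \circ \overline{\Psi}$, I would compute $\Psi(Q + \phi(R))$ componentwise: the first component is $(Q + \phi(R)) + (Q + \phi(R))^\sigma = 2Q + (\phi(R) + \phi(R)^\sigma)$, and since $R \in E_D(\Q)$ forces $\phi(R)^\sigma = -\phi(R)$, this collapses to $[2]Q$; a parallel computation with the sign reversed yields $[2]R$ in the second component, giving $\Psi \circ \overline{\Psi} = [2] \times [2]$.

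The structural claims then follow formally from these identities. If $P \in \ker(\Psi)$ then $[2]P = \overline{\Psi}(\Psi(P)) = \cO$, so $\ker(\Psi) \subset E(K)[2]$; symmetrically $\ker(\overline{\Psi}) \subset E(\Q)[2] \times E_D(\Q)[2]$ since $[2] \times [2]$ kills it. For the cokernels, the relation $\overline{\Psi} \circ \Psi = [2]$ shows that $\mathrm{Im}(\overline{\Psi}) \supset [2]E(K)$, so every element of $\coker(\overline{\Psi})$ is killed by multiplication by $2$; the same argument via $\Psi \circ \overline{\Psi} = [2] \times [2]$ handles $\coker(\Psi)$. I expect the main obstacle to be purely bookkeeping rather than conceptual: one must confirm that $\Psi$ and $\overline{\Psi}$ are genuine group homomorphisms (which rests on $\sigma$ and $\phi$ both being homomorphisms, so the trace and twisted-trace maps are too) and, more delicately, verify the Galois-equivariance identity $\phi(R)^\sigma = -\phi(R)$ precisely for $R \in E_D(\Q)$ by unwinding the defining relation $DY^2 = X^3 + AX + B$ under the square-root change of variables. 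Once that twisting compatibility is pinned down explicitly, as in \cite{Chahal}, the remainder is a clean formal consequence.
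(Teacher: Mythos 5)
Your proposal is correct and follows essentially the same route as the paper: the maps $\Psi(P)=(P+\sigma P,\,\phi^{-1}(P-\sigma P))$ and $\overline{\Psi}(Q,R)=Q+\phi(R)$ are exactly the ones constructed there (the paper writes $\phi(\cdot,1/\sqrt{D})$ and $\phi(\cdot,\sqrt{D})$ for your $\phi^{-1}$ and $\phi$), and your verification of the composition identities and the formal deduction of the kernel and cokernel statements is the intended argument, spelled out in somewhat more detail than the paper's own closing remark.
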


\begin{proof}
Let $\sigma$ be the non--trivial element of $\mbox{Gal}(K/\Q)$ and $\alpha \in K$. Let us write, for any $P = (x,y)$,
$$
\sigma P = ( \sigma(x), \sigma(y) )\quad\mbox{and}\quad\phi(P,\alpha) = (x,\alpha y).
$$
Recall the canonical $K$--isomorphisms between $E$ and its $D$--twist $E_D$:
$$
\xymatrix{E(K) \ar[rr]^{\phi \left(\, \cdot\,,1/\sqrt{D} \right)} & & E_D(K)\ar[rr]^{\phi \left(\, \cdot\,,\sqrt{D} \right)} & & E(K)}
%E(K) \stackrel{\phi \left( \cdot,1/\sqrt{D} \right)}{\longrightarrow} %E_D(K)\stackrel{\phi \left( \cdot,\sqrt{D} \right)}{\longrightarrow} E(K).
$$
Let $P \in E(K)$. As $\sigma (P + \sigma P) = P + \sigma P$, we have $P + \sigma P \in E(\Q)$. Consider now $P - \sigma P$. We have that, either $\sigma (P - \sigma P) = \cO,$ in which case $P = \sigma P$ and $P \in E(\Q)$, or $\sigma (P - \sigma P) = -(P - \sigma P) \neq \cO.$ Should this be the case, if we write $R = P - \sigma P = (x_R,y_R)$ we have $\sigma (x_R,y_R) = (x_R,-y_R)$. Therefore, $y_R = z_R \sqrt{D}$ for some $z_R \in \Q$, and in this case it is clear that
$$
\phi \left( (x_R,y_R),1 / \sqrt{D} \right) = (x_R, z_R) \in E_D (\Q).
$$
So we define $\Psi$ as follows:
\begin{eqnarray*}
\Psi: E(K) & \longrightarrow & E(\Q) \times E_D(\Q) \\
P & \longmapsto & \left( P + \sigma P, \; \phi \left( P - \sigma P ,1/\sqrt{D} \right) \right)
\end{eqnarray*}
$\Psi$ is clearly a homomorphism: it is straightforward if we write it
$$
\Psi: E(K) \longrightarrow E(K) \times E_D(K),
$$
and we have just shown $\imagen(\Psi) \subset E(\Q) \times E_D(\Q)$.

On the other hand, let us consider a point $R = (x_R,y_R) \in E_D(\Q)$. Clearly 
$$
\phi(R,\sqrt{D}) = \left( x_R,y_R\sqrt{D}  \right) \in E(K),
$$
and we define then
\begin{eqnarray*}
\overline{\Psi}: E(\Q) \times E_D(\Q) & \longrightarrow & E(K) \\
(P,R) & \longmapsto & P + \phi(R,\sqrt{D}).
\end{eqnarray*}
Analogously, it is not difficult to see $\overline{\Psi}$ is a homomorphism. We could have defined (exactly the same way) $\overline{\Psi}: E(K) \times E_D(K) \longrightarrow E(K)$; which is clearly a homomorphism, and then we have it restricted to the subgroup $E(\Q) \times E_D(\Q)$. 

With these definitions, the results from the theorem can be easily deduced.
\end{proof}

\begin{corollary}\label{cor4}
Let $E$ be an elliptic curve defined over $\Q$, $D$ an square-free integer and $K = \Q ( \sqrt{D} )$. If $n$ is odd, then there exists an isomorphism
$$
E(K)[n] \simeq E(\Q)[n] \times E_D(\Q)[n].
$$
\end{corollary}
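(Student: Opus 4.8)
The plan is to exploit Theorem \ref{teo3} together with the elementary fact that multiplication by $2$ is invertible on any abelian group all of whose elements have odd order. Since $n$ is odd we have $\gcd(2,n)=1$, so $[2]$ restricts to an automorphism of every $n$--torsion subgroup appearing below; this is the single observation that powers the whole argument.

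First I would note that, being group homomorphisms, both $\Psi$ and $\overline{\Psi}$ send points of order dividing $n$ to points of order dividing $n$. Hence they restrict without any extra work to maps
$$
\Psi \colon E(K)[n] \longrightarrow E(\Q)[n] \times E_D(\Q)[n], \qquad \overline{\Psi} \colon E(\Q)[n] \times E_D(\Q)[n] \longrightarrow E(K)[n].
$$
The goal is then to show that the restricted $\Psi$ is an isomorphism.

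Next I would feed the two composition identities of Theorem \ref{teo3} into these restrictions. On $E(K)[n]$ we have $\overline{\Psi}\circ\Psi=[2]$, and since $[2]$ is an automorphism of $E(K)[n]$, the restriction of $\Psi$ is injective: if $\Psi(P)=\cO$ then $[2]P=\cO$, which forces $P \in E(K)[2]\cap E(K)[n]=\{\cO\}$ by coprimality. Symmetrically, on the product $E(\Q)[n]\times E_D(\Q)[n]$ we have $\Psi\circ\overline{\Psi}=[2]\times[2]$, again an automorphism; therefore every element of the product is of the form $\Psi\bigl(\overline{\Psi}(\cdot)\bigr)$ and so lies in the image of $\Psi$, giving surjectivity. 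A bijective homomorphism is an isomorphism, which is exactly the claimed $E(K)[n]\simeq E(\Q)[n]\times E_D(\Q)[n]$.

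I do not expect a genuine obstacle here: the entire content is the coprimality of $2$ and $n$, which upgrades the ``isomorphism up to $2$--torsion kernels and $2$--power cokernels'' furnished by Theorem \ref{teo3} into an honest isomorphism on odd torsion. The only point that merits a line of care is verifying that the kernels and cokernels of Theorem \ref{teo3} — all annihilated by $2$ — intersect the odd torsion trivially, which is precisely what $\gcd(2,n)=1$ guarantees.
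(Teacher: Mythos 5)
Your argument is correct and is exactly the intended derivation: the paper states this as an immediate corollary of Theorem \ref{teo3} without writing out a proof, and the route it has in mind is precisely yours — restrict $\Psi$ and $\overline{\Psi}$ to the $n$--torsion and use $\overline{\Psi}\circ\Psi=[2]$, $\Psi\circ\overline{\Psi}=[2]\times[2]$ together with the invertibility of $[2]$ on odd torsion to get injectivity and surjectivity. Nothing is missing.
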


Our description of $\Phi_\Q(2,G)$, for the eleven cyclic groups in $\Phi(1)$, will rest in the following result:

\begin{theorem}\label{teo}
Let $E$ be an elliptic curve defined over $\Q$, $K$ a quadratic number field, $G\in \Phi_\Q(1)$ and $H\in \Phi_\Q(2)$ such that $E(\Q)_{\tors}\simeq G$ and $E(K)_{\tors}\simeq H$.
\begin{itemize}
\item[(i)] If $\Q(E[3]) = \Q \left( \sqrt{-3} \right)$, then $\cC_3 \subset G$.
\item[(ii)] If $\cC_2 \not\subset G$, then $\cC_2 \not\subset H$.
\item[(iii)] If $G$ is cyclic, $\cC_2 \subseteq G$ and $\cC_4 \not\subset G$, then $\cC_2 \times \cC_4 \not\subset H$.
\item[(iv)] If $G= \cC_4$, then $H\ne \cC_{16}$.
\item[(v)] If $G= \cC_3$, then $H\ne \cC_{9}$.
\item[(vi)] If $H= \cC_{15}$, then $G= \cC_3$ or $G= \cC_5$.
\end{itemize}
\end{theorem}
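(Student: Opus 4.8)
The plan is to run everything off two facts. First, since $K/\Q$ is Galois with group $\langle\sigma\rangle$, one has $E(\Q)_{\tors}=\bigl(E(K)_{\tors}\bigr)^{\langle\sigma\rangle}$, so $G\simeq H^{\langle\sigma\rangle}$; in particular, when $H=\langle P\rangle$ is cyclic of order $N$, $\sigma$ acts as $P\mapsto cP$ for a unit $c$ with $c^2\equiv1\pmod N$, and $G$ is cyclic of order $\gcd(c-1,N)$. Second, I will use the maps of Theorem \ref{teo3}: the trace $P+\sigma P$ lies in $E(\Q)$ for every $P\in E(K)$, and Corollary \ref{cor4} splits odd torsion as $E(K)[n]\simeq E(\Q)[n]\times E_D(\Q)[n]$.

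For (i) I would note that the Weil pairing gives $\Q(\sqrt{-3})=\Q(\zeta_3)\subseteq\Q(E[3])$, so the hypothesis means $E[3]\subseteq E(K)$, i.e. $E(K)[3]\simeq\cC_3\times\cC_3$; Corollary \ref{cor4} with $n=3$, $D=-3$ then gives $\cC_3\times\cC_3\simeq E(\Q)[3]\times E_{-3}(\Q)[3]$, and since $\mu_3\not\subseteq\Q$ forbids either factor from being $\cC_3\times\cC_3$, a count of orders forces $E(\Q)[3]\simeq\cC_3$, so $\cC_3\subseteq G$. For (ii), $\cC_2\not\subseteq G$ says $X^3+AX+B$ is irreducible over $\Q$; any root generates a cubic field, which cannot sit inside the quadratic field $K$, so $E(K)[2]$ is trivial and $\cC_2\not\subseteq H$.

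For (iv), (v), (vi) I would simply compute the fixed group via the first fact. With $H=\cC_{16}$ the admissible $c\in\{1,7,9,15\}$ give $|G|\in\{16,8,2\}$, never $4$, so $G=\cC_4$ is incompatible with $H=\cC_{16}$; with $H=\cC_9$ the admissible $c\in\{1,8\}$ give $|G|\in\{9,1\}$, never $3$; with $H=\cC_{15}$ the admissible $c\in\{1,4,11,14\}$ give $G\in\{\cC_{15},\cC_3,\cC_5,\cC_1\}$. For (vi) I then discard $\cC_{15}$ by Mazur, and discard $\cC_1$ because this is the case $c=-1$, where Corollary \ref{cor4} with $n=15$ would put all of $\cC_{15}$ inside $E_D(\Q)$, again impossible by Mazur; this leaves $G=\cC_3$ or $\cC_5$.

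The part I expect to require real care is (iii), the only one needing genuine $2$--torsion structure rather than a bare application of the tools. Here $G$ cyclic with $\cC_2\subseteq G$, $\cC_4\not\subseteq G$ forces $E(\Q)[2]=\{\cO,T_0\}$ and no rational point of order $4$. Assuming $\cC_2\times\cC_4\subseteq H$ gives $E[2]\subseteq E(K)$; labelling the nonzero $2$--torsion as $T_0,T_1,T_2$ with $T_1+T_2=T_0$, the element $\sigma$ swaps $T_1\leftrightarrow T_2$. I would take $P\in E(K)$ of order $4$ and split on $2P$: if $2P=T_1$, then $R=P+\sigma P\in E(\Q)$ satisfies $2R=T_1+T_2=T_0\ne\cO$, a rational point of order $4$; if $2P=T_0$, then $P-\sigma P\in E[2]$ is $\sigma$--fixed, so equals $\cO$ (whence $P\in E(\Q)$) or $T_0$ (whence $P+T_1$ is $\sigma$--fixed with $2(P+T_1)=T_0$), again a rational point of order $4$. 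Each outcome contradicts $\cC_4\not\subseteq G$, proving $\cC_2\times\cC_4\not\subseteq H$. The only genuine obstacle is manufacturing a \emph{rational} order--$4$ point out of a merely $K$--rational one, which is exactly what the trace $P+\sigma P$ together with the relation $T_1+T_2=T_0$ delivers.
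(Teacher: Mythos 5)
Your proposal is correct, and for most of the six parts it takes a genuinely different route from the paper. Part (ii) coincides with the paper's one-line argument. For (i) the paper simply cites Palladino's Proposition 2.1, whereas you rederive it from Corollary \ref{cor4} with $n=3$, $D=-3$ together with the Weil-pairing obstruction $\mu_3\not\subset\Q$; this makes the statement self-contained at essentially no cost. For (iv)--(vi) the paper uses, respectively, the four-term exact sequence attached to $\Psi$ (ruling out $\coker(\Psi)\simeq\cC_4$), Corollary \ref{cor4} with $n=9$, and Najman's explicit list of the four curve/field pairs realizing $\cC_{15}$; you replace all three by the single observation that $\sigma$ acts on a cyclic $H$ of order $N$ as multiplication by some $c$ with $c^2\equiv 1\pmod N$, so that $|G|=\gcd(c-1,N)$. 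I verified the resulting lists: $|G|\in\{16,8,2\}$ for $N=16$, $|G|\in\{9,1\}$ for $N=9$, and $G\in\{\cC_{15},\cC_3,\cC_5,\cC_1\}$ for $N=15$, with the two endpoints excluded by Mazur plus Corollary \ref{cor4} applied to the twist. This is more uniform and avoids the appeal to Najman's classification, though the paper's route has the side benefit of identifying the explicit curves, which it reuses in Table \ref{table2}. The largest divergence is (iii): the paper fixes a model $Y^2=X(X^2+AX+B)$, applies the duplication criterion from Knapp and solves the resulting norm equations, incidentally exhibiting the family $Y^2=X(X^2+(\alpha^2+2\beta)X+\beta^2)$ that already carries a rational point of order $4$; you instead manufacture a rational point of order $4$ directly, as $P+\sigma P$ when $2P$ is an irrational $2$-torsion point, or as $P$ or $P+T_1$ when $2P=T_0$, using only $\sigma T_1=T_2$ and $T_1+T_2=T_0$. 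I checked the key identity $\sigma(P+T_1)=\sigma P+T_2=(P-T_0)+T_2=P+T_1$ in the last sub-case, and the fact that a $\sigma$-fixed element of $E[2]$ must be $\cO$ or $T_0$; both hold. Your version of (iii) is shorter and model-free, at the price of not producing the explicit parametrization.
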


\begin{proof}
(i) is Proposition 2.1 from \cite{Palladino}. Note (see Section \ref{sec3}) that if $E(K)$ has full $3$--torsion, then $K=\Q \left( \sqrt{-3}\right)$.

(ii) Direct, as the hypothesis implies the irreducibility of $X^3+AX+B$ over $\Q$ (hence over $K$).

(iii) With no loss of generality, assume $E$ to have the form
$$
E: Y^2 = X(X^2+AX+B).
$$

If $\cC_2 \times \cC_2 \subset E(K)$, it must then be $K = \Q( \sqrt{\Delta_E})$, with $\Delta_E = A^2-4B$, which we will call $D$ from now on. The full set of points with order two is
$$
(0,0), \quad \quad \left( -\frac{1}{2}(A-\sqrt{D}),0 \right), 
\quad \quad \left( -\frac{1}{2}(A+\sqrt{D}),0 \right).
$$

Let us assume there is a point of order $4$. Then one of the previous points is the double of such a point, hence (see \cite{Knapp} Thm. 4.2.) one of the following pairs consists of two squares in $K$:
$$
\left\{ \frac{1}{2}(A-\sqrt{D}), \; \frac{1}{2}(A+\sqrt{D}) \right\}, \quad 
\left\{ \frac{1}{2}(-A+\sqrt{D}), \; \sqrt{D} \right\}, \quad 
\left\{ -\sqrt{D}, \; -\frac{1}{2}(A+\sqrt{D}) \right\}.
$$

Only the first possibility can hold. Now an element $a+b\sqrt{D} \in (K^*)^2$ if and only if there exist $x,y \in \Q$ such that
$$
a = x^2 + Dy^2, \quad \quad b=2xy;
$$
so we must have
$$
\left\{ \begin{array}{rcl}
A/2 &=& x^2+Dy^2 \\ 1 &=& 4xy
\end{array} \right. \quad \quad 
\left\{ \begin{array}{rcl}
A/2 &=& t^2+Dz^2 \\ -1 &=& 4tz
\end{array} \right.
$$

Clearly both systems have a solution if and only if one of them has. The first gives rise to
$$
y = \frac{1}{4x}, \quad \quad x^2 = \frac{A}{4} \pm \frac{\sqrt{B}}{2},
$$
so we must assume $B=C^2$ and then $x^2=A/4 \pm C/2$. Therefore, our curve must have the form (renaming $x$ and $C$ as $\alpha/2$ and $\beta$, purely for aesthetic purposes):
$$
E: Y^2 = X(X^2+(\alpha^2+2\beta)X+\beta^2),
$$
but for every $E$ in this familiy of curves we have $\cC_4 \subset E(\Q)$, as the point $(-\beta, \alpha \beta)$ has order $4$. Note that the non--vanishing of $\Delta_E$ is equivalent here to $\alpha^2(\alpha + 4 \beta) \neq 0$.

(iv) Assume we have $K=\Q(\sqrt{D})$ and $E$, with $E(\Q)_{\tors} \simeq \cC_4$, $E(K)\simeq \cC_{16}$. Then, from $\Psi$,
$$
0 \to \ker (\Psi) \stackrel{i}{\longrightarrow} E(K) \stackrel{\Psi}{\longrightarrow} E(\Q) \times E_D(\Q) \stackrel{\pi}{\longrightarrow}
\coker (\Psi) \to 0,
$$
and considering the torsion part, we have $\ker (\Psi)$ is either trivial or $\cC_2$ and $E_D(\Q)_{\tors} \simeq \cC_{2n}$, with $n=1,\dots,6$. So the only possibility is $\ker (\Psi) \simeq \cC_2$, $E_D(\Q)_{\tors} \simeq \cC_8$. But then $\coker (\Psi) \simeq \cC_4$, which contradicts Theorem \ref{teo3}.

(v) Direct from Corollary \ref{cor4}, with $n=9$.

(vi) If $H= \cC_{15}$, then Najman \cite[Theorem 2, c)]{Najman} has shown that $E$ is the elliptic curve  \texttt{50b1} or \texttt{50a3} with $K=\Q (\sqrt{5})$; and  \texttt{50b2} or \texttt{450b4} with $K=\Q( \sqrt{-15})$. Both \texttt{50b1} and \texttt{50b2} have $G=\cC_5$, while \texttt{50a3} and \texttt{450b4} have $G=\cC_3$. Therefore, $H=\cC_{15}$ will only appear in $\Phi_\Q(2,G)$ for $G=\cC_3,\cC_5$.
\end{proof}

\begin{proof}(Theorem \ref{main})
The sets $\Phi_\Q(2,G)$ were first conjectured by the computations mentioned above. The relevant results can be found in Table \ref{table2}. In particular, this shows that all groups said to be in $\Phi_\Q(2,G)$ belong to this set. 

\begin{table}[ht]
\caption{The cases not treated in \cite{Kwon}. The table displays either if the case happens ($\checkmark$), if it is impossible because $G \not\subset H$ ($-$) or if it is ruled out by Theorem \ref{teo} ((i)--(vi)).}\label{table1}
\begin{tabular}{|c|c|c|c|c|c|c|c|c|c|c|c|}
\cline{2-12}
\multicolumn{1}{c|}{} & $\cC_1$ & $\cC_2$ & $\cC_3$ & $\cC_4$ & $\cC_5$ & $\cC_6$ & $\cC_7$ & $\cC_8$ & $\cC_9$ & $\cC_{10}$ & $\cC_{12}$ \\
\hline
$\cC_1$ & $\checkmark$ & $-$ & $-$ & $-$ & $-$ & $-$ & $-$ & $-$ & $-$ & $-$ & $-$ \\
\hline
$\cC_2$ & (ii) & $\checkmark$ & $-$ & $-$ & $-$ & $-$ & $-$ & $-$ & $-$ & $-$ & $-$  \\
\hline
$\cC_3$ & $\checkmark$ & $-$ & $\checkmark$ & $-$ & $-$ & $-$ & $-$ & $-$ & $-$ & $-$ & $-$  \\
\hline
$\cC_4$ & (ii) & $\checkmark$ & $-$ & $\checkmark$ & $-$ & $-$ & $-$ & $-$ & $-$ & $-$ & $-$ \\
\hline
$\cC_5$ & $\checkmark$ & $-$ & $-$ & $-$ & $\checkmark$ & $-$ & $-$ & $-$ & $-$ & $-$ & $-$  \\
\hline
$\cC_6$ & (ii) & $\checkmark$ & (ii) & $-$ & $-$ & $\checkmark$ & $-$ & $-$ & $-$ & $-$ & $-$ \\
\hline
$\cC_7$ & $\checkmark$ & $-$ & $-$ & $-$ & $-$ & $-$ & $\checkmark$ & $-$ & $-$ & $-$ & $-$ \\
\hline
$\cC_8$ & (ii) & $\checkmark$ & $-$ & $\checkmark$ & $-$ & $-$ & $-$ & $\checkmark$ & $-$ & $-$ & $-$  \\
\hline
$\cC_9$ & $\checkmark$ & $-$ & (v) & $-$ & $-$ & $-$ & $-$ & $-$ & $\checkmark$ & $-$ & $-$\\
\hline
$\cC_{10}$ & (ii) & $\checkmark$ & $-$ & $-$ & (ii) & $-$ & $-$ & $-$ & $-$ & $\checkmark$ & $-$  \\
\hline
$\cC_{12}$ & (ii) & $\checkmark$ & (ii) & $\checkmark$ & $-$ & $\checkmark$ & $-$ & $-$ & $-$ & $-$ & $\checkmark$ \\
\hline
$\cC_{15}$ & (vi) & $-$ & $\checkmark$ & $-$ & $\checkmark$ & $-$ & $-$ & $-$ & $-$ & $-$ & $-$ \\
\hline
$\cC_{16}$ & (ii) & $\checkmark$ & $-$ & (iv) & $-$ & $-$ & $-$ & $\checkmark$ & $-$ & $-$ & $-$  \\
\hline
$\cC_2 \times \cC_2$ & (ii) & $\checkmark$ & $-$ & $-$ & $-$ & $-$ & $-$ & $-$ & $-$ & $-$ & $-$  \\
\hline
$\cC_2 \times \cC_4$ & (ii) & (iii) & $\checkmark$ & $-$ & $-$ & $-$ & $-$ & $-$ & $-$ & $-$ & $-$ \\
\hline
$\cC_2 \times \cC_6$ & (ii) & $\checkmark$ & (ii) & $-$ & $-$ & $\checkmark$ & $-$ & $-$ & $-$ & $-$ & $-$ \\
\hline
$\cC_2 \times \cC_8$ & (ii) & (iii) & $-$ & $\checkmark$ & $-$ & $-$ & $-$ & $\checkmark$ & $-$ & $-$ & $-$ \\
\hline
$\cC_2 \times \cC_{10}$ & (ii) & $\checkmark$ & $-$ & $-$ & (ii) & $-$ & $-$ & $-$ & $-$ & $\checkmark$ & $-$ \\
\hline
$\cC_2 \times \cC_{12}$ & (ii) & (iii) & (ii) & $\checkmark$ & $-$ & (iii) & $-$ & $-$ & $-$ & $-$ & $\checkmark$  \\
\hline
$\cC_3 \times \cC_3$ & (i) & $-$ & $\checkmark$ & $-$ & $-$ & $-$ & $-$ & $-$ & $-$ & $-$ & $-$ \\
\hline
$\cC_3 \times \cC_6$ & (i), (ii) & (i) & (ii) & $-$ & $-$ & $\checkmark$ & $-$ & $-$ & $-$ & $-$ & $-$ \\
\hline
$\cC_4 \times \cC_4$ & (ii) & (iii) & $-$ & $\checkmark$ & $-$ & $-$ & $-$ & $-$ & $-$ & $-$ & $-$ \\
\hline
\end{tabular}

\end{table}

The groups $H$ from $\Phi_\Q(2)$ that do not appear in some $\Phi_\Q(2,G)$, with $G < H$ can be ruled out from $\Phi_\Q(2,G)$ most of the times using the previous theorem.

Table \ref{table1} (row $=H$, column $=G$) deals with the case $G \neq \cC_2 \times \cC_{2m}$. The non--cyclic case being treated in (\ref{eqkwon}) (cf. \cite{Kwon}). 

\end{proof}

\section{Real vs. imaginary quadratic extensions}\label{sec3}

A natural question might be if there is any substantial difference between the real and imaginary quadratic case. Our computations (see Section \ref{appendix}) show it is not so, except for some well--known cases. 

In fact, from the Weil pairing \cite{Silverman} we know that, for a number field $K$, if $\cC_m \times \cC_m \subset E(K)$, then $K$ contains the cyclotomic field generated by the $m$--th roots of unity. In the quadratic case, that implies: 
\begin{itemize}
\item $\cC_3 \times \cC_3$ and $\cC_3 \times \cC_6$ can only appear in the imaginary extension $\Q \left( \sqrt{-3} \right)$.
\item $\cC_4 \times \cC_4$  can only appears in the imaginary extension $\Q \left( \sqrt{-1} \right)$.
\end{itemize}

For all the remaining cases, groups in $\Phi_\Q(2)$ appear in both real and imaginary cases. This is shown in Table 2. Consider all elliptic curves, defined over the rationals with:
\begin{itemize}
\item $E(\Q)_{tors}$ as given in the first column
\item $E(\Q(\sqrt{D}))_{tors}$ as given in the second column, for some $D$ as given in the fourth (resp. sixth) column for the real case (resp. imaginary case).
\end{itemize}
Then 
\begin{itemize}
\item The seventh (penultimate) column is the number of curves with conductor less than $300000$ which meet this situation, for a real extension $\Q \subset K$.
\item The eighth (last) column is is the number of curves with conductor less than $300000$ which meet this situation, for an imaginary extension $\Q \subset K$.
\end{itemize}

Notice that the only cases where "$-$" appears are the ones remarked above.

\section{On the number of quadratic extensions with proper extension of the torsion subgroup}

Consider the following problem, closely related to our original one. Take an elliptic curve, defined over the rationals, and allow the base field to be extended to a quadratic number field. How many cases of proper extension in the corresponding torsion groups can we predict to appear?

To begin with, for a fixed curve $E$, then only a small amount of quadratic extensions will be interesting from the point of view of the torsion subgroup. This is a known result; see for instance, \cite[Corollary 2]{Kwon} for a different approach or \cite[Lemma 3.4]{Jeon} for a similar proof than the one presented here, which we include because it suits our forthcoming arguments.

\begin{theorem}\label{divpol}
Let $E$ be an elliptic curve defined over the rationals. Then, for all but finitely many quadratic extensions $K/\Q$, $E(K)_{\tors} = E(\Q)_{\tors}$.
\end{theorem}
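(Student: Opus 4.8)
The plan is to reduce the statement to a finiteness count on torsion points of bounded order. First I would invoke the uniform boundedness in the quadratic case (Kamienny–Kenku–Momose, as recalled in the description of $\Phi(2)$ above): since $E(K)_{\tors} \in \Phi(2)$ for every quadratic field $K$, and $\Phi(2)$ is a finite set of groups, there is a single integer $N$ such that every torsion point of $E$ rational over any quadratic field has order dividing $N$. Consequently any such point lies in the finite set $E[N] \subset E(\overline{\Q})$, which has exactly $N^2$ elements. Note that the inclusion $E(\Q)_{\tors} \subseteq E(K)_{\tors}$ is automatic, so the only way the two groups can differ is by proper extension, and it suffices to bound the set of $K$ where this happens.

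Next I would argue that each exceptional field is pinned down by a single new torsion point. Suppose $K$ is a quadratic field with $E(K)_{\tors} \supsetneq E(\Q)_{\tors}$, and choose a point $P \in E(K)_{\tors} \setminus E(\Q)_{\tors}$. Then $P \in E[N]$ and its field of definition satisfies $\Q \subsetneq \Q(P) \subseteq K$; since $[K:\Q] = 2$, this forces $\Q(P) = K$. Thus every quadratic field producing extra torsion is of the form $\Q(P)$ for some $P \in E[N]$, and as $E[N]$ is finite, only finitely many fields arise.

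To make this concrete (and to justify the label) I would phrase the last step through the division polynomials $\psi_n$. The $x$-coordinates of the points in $E[N]$ are precisely the roots of $\psi_N(X) \in \Q[X]$, a polynomial with finitely many roots. A point $P=(x_0,y_0) \in E[N]$ can be defined over a quadratic field only if either $x_0$ generates a quadratic field, in which case $\Q(P) = \Q(x_0)$, or $x_0 \in \Q$ and $y_0 \notin \Q$, in which case $\Q(P) = \Q\left(\sqrt{f(x_0)}\right)$ with $f(X) = X^3 + AX + B$. Each of the finitely many roots $x_0$ therefore contributes at most one candidate quadratic field, so the set of exceptional $K$ is finite, which is the claim.

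The genuine content of the argument is entirely front-loaded into the existence of the bound $N$: once uniform boundedness in degree $2$ is granted, the remainder is a clean counting argument and presents no real obstacle. If one wished to avoid citing uniform boundedness, the main difficulty would shift to producing an explicit $N$ for the given $E$; but since the present paper operates throughout within the known description of $\Phi(2)$, I would simply take $N$ from there.
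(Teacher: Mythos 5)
Your proof is correct and follows essentially the same route as the paper: both arguments first bound the possible orders of quadratic torsion points via the known classification in degree $2$, and then observe that each new torsion point pins down its quadratic field of definition through the (finitely many) roots of the relevant division polynomials. The only cosmetic difference is that the paper works with the explicit list of orders $\{1,\dots,10,12,15,16\}$ from $\Phi_\Q(2)$ rather than a single modulus $N$, which makes the resulting bound sharper but changes nothing structurally.
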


\begin{proof} 
The proof is actually the method used in our computations above. 

The order of a torsion point defined over a quadratic extension must be $m \in {\mathcal T} = \{1,\dots,10,12,15,16\}$. Therefore, for a given curve $E$ defined over $\Q$, one only has to compute the $m$--th division polynomials $\psi_m(X)$ ($m$ odd) or $\psi_m(X)/(2Y)$ ($m$ even) \cite{Silverman}, for all $m \in {\mathcal T}$, and look for irreducible quadratic and linear factors in $\Q[X]$. 

For quadratic factors, we must consider their splitting fields. For linear factors $X-\alpha$, we must take the extension (maybe trivial) $\Q\left(\sqrt{\alpha^3+A\alpha+B}\right)$, where $E:y^2=x^3+Ax+B$. These extensions, which are obviously finitely many, are the only quadratic ones where the torsion subgroup may grow.
\end{proof}

One might in fact give an upper bound for the number of quadratic extensions where $E(K)_{\tors} \neq E(\Q)_{\tors}$, simply considering the number of linear factors that may appear in $\psi_m(X)$ or $\psi_m(X)/(2Y)$, with $m \in {\mathcal T}$ being a prime power.

If we want to be more precise we can use Theorem 2 to reduce the number of the division polynomials that must be taken into account. For instance, if $E(\Q)_{\tors}=\cC_3$, we know that $E(K)_{\tors} \in \left\{ \cC_3, \, \cC_5, \cC_{15}, \cC_3 \times \cC_3 \right\}$, hence both torsion subgroups are different if and only if $E(K)$ has either a point of order $5$ or a non--rational point of order $3$. As $\deg(\psi_3)=4$ and $\deg(\psi_5)=12$ there are, at most, $16$ quadratic extensions where the torsion grows.

\begin{corollary}
For any elliptic curve $E$ defined over the rational field, $G=E(\Q)_{\tors}$. The number of quadratic extensions $K$ verifying $E(K)_{\tors} \neq G$ is bounded by a constant $k_G$ that only depends on $G$, and is given by:

\begin{tabular}{|c|c||c|c||c|c||c|c||c||c|}
\hline
$\quad G \quad$ & $k_G$ & $\quad G \quad$ & $k_G$ & $\quad G \quad$ & $k_G$ & $\quad G \quad$ & $k_G$ & $\quad G \quad$ & $k_G$ \\
\hline
\hline
$\cC_1$ & $80$ & $\cC_4$ & $43$ & $\cC_7$ & $0$ & $\cC_{10}$ & $1$ & $\cC_2 \times \cC_4$ & $38$ \\
\hline
$\cC_2$ & $182$ & $\cC_5$ & $4$ & $\cC_8$ & $128$ & $\cC_{12}$ & $1$ & $\cC_2 \times \cC_6$ & $7$ \\
\hline
$\cC_3$ & $16$ & $\cC_6$ & $12$ & $\cC_9$ & $0$ & $\cC_2 \times \cC_2$  & $42$ & $\cC_2 \times \cC_8$ & $0$ \\
\hline

\end{tabular}
\end{corollary}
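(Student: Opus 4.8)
The plan is to turn the method in the proof of Theorem~\ref{divpol} into an explicit count, curbed by the classification in Theorem~\ref{main}. Any quadratic $K$ with $E(K)_{\tors}\neq E(\Q)_{\tors}=G$ must acquire a torsion point of order in $\mathcal{T}$ not already in $G$; writing a new point of order $n=\prod p_i^{e_i}$ as a sum of its prime-power multiples, at least one summand must itself be new, so growth is always witnessed by a new point of prime-power order. For a fixed $G$, Theorem~\ref{main} tells us exactly which $H\supsetneq G$ occur, hence which prime powers $m$ can be the order of such a witness. I would collect these into a finite set $P(G)$ of admissible prime powers and bound
\[
k_G\ \le\ \sum_{m\in P(G)} a_m,
\]
where $a_m$ is an upper bound for the number of quadratic fields over which a new point of order $m$ first appears.

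The core is the estimate for each $a_m$, following the factorisation idea of Theorem~\ref{divpol}. If $P=(x,y)$ is a new point of order $m$ over a quadratic field, then $x$ is a root of the relevant division polynomial with $[\Q(x):\Q]\le 2$, so $x$ is the root of a linear or of an irreducible quadratic factor over $\Q$; a linear factor $X-\alpha$ yields at most the single extension $\Q(\sqrt{\alpha^3+A\alpha+B})$, and an irreducible quadratic factor yields only its splitting field, so each factor accounts for at most one $K$. For odd $m=p^j$ the relevant polynomial is $\psi_m$, of degree $(m^2-1)/2$, whence $a_m\le (m^2-1)/2$ (giving $a_3=4$, $a_5=12$, $a_7=24$, $a_9=40$). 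For $m=2$ one first invokes Theorem~\ref{teo}(ii): a new $2$-torsion point forces $\cC_2\subseteq G$, so $X^3+AX+B$ already has a rational root with an irreducible quadratic cofactor (the $2$-part being cyclic), and its splitting field is the unique quadratic field carrying the new $2$-torsion; hence $a_2=1$. For $m=2^j$ with $j\ge 2$ the relevant polynomial is $\psi_m/(2Y)$, of degree $(m^2-4)/2$, and I would add one further unit to absorb the quadratic extension possibly produced by the accompanying new $2$-torsion, giving $a_{2^j}\le (m^2-4)/2+1$ (so $a_4=7$, $a_8=31$, $a_{16}=127$).

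With these values in hand, each table entry reduces to reading $P(G)$ off Theorem~\ref{main} and summing. When $\Phi_\Q(2,G)=\{G\}$ — namely $G=\cC_7,\cC_9,\cC_2\times\cC_8$ — there is nothing to adjoin and $k_G=0$. Otherwise I would decompose each $H\in\Phi_\Q(2,G)$ into cyclic prime-power factors, record which factors strictly enlarge the corresponding part of $G$, and so obtain $P(G)$; for instance $P(\cC_2)=\{2,3,4,5,8,16\}$ gives $k_{\cC_2}=1+4+7+12+31+127=182$, and $P(\cC_4)=\{2,3,4,8\}$ gives $k_{\cC_4}=1+4+7+31=43$. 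The remaining rows are identical bookkeeping.

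The main obstacle is not any single estimate but the bookkeeping of $P(G)$ together with the $2$-primary subtleties. One must separate a genuinely new $2$-torsion point (the isolated $a_2=1$) from a taller cyclic factor of order $2^j$ (the term $a_{2^j}$), and decide in each row which actually occur. I expect the even-order ``$+1$'' to be the most delicate point to state cleanly, since the accompanying $2$-torsion extension may coincide with one already counted or, as for $G=\cC_2\times\cC_6$, may fail to be genuinely new; because we only claim an upper bound, such coincidences and occasional over-counting are harmless, but they are precisely what stops the bound from being asserted as sharp.
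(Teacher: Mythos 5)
Your proposal is correct and is essentially the paper's own argument: the authors likewise obtain each $k_G$ by using Theorem~\ref{main} to restrict to the relevant prime-power orders $m\in\mathcal{T}$ and then bounding the number of quadratic fields by the number of degree-$\le 2$ factors of $\psi_m$ (resp.\ $\psi_m/(2Y)$), exactly as in the $\cC_3$ example preceding the corollary. Your reconstruction of the individual summands (including the special value $a_2=1$ forced by Theorem~\ref{teo}(ii) and the extra unit in $a_{2^j}$ for $j\ge 2$) reproduces every entry of the table, and your remark that the over-counting is harmless for an upper bound is the right way to justify it.
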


The result is by no means accurate. In fact, it is not very complicated to sharpen the bound for odd-order groups, the even-order ones being much less understood (in practical terms). Our experimental data suggests that, in fact, the bound might well be $4$ quadratic extensions for all cases.

\

\noindent {\bf Example.--} The elliptic curve \verb|30a7| has minimal Weierstrass equation:
$$
E: Y^2 + XY + Y = X^3 - 5334X - 150368
$$
and $E(\Q)_{tors} = \cC_2$. Even more:
$$
\begin{array}{|c|c|c|c|c|}
\hline
D & -5 & -3 & -2 & -10 \\
\hline
E ( \Q (\sqrt{D}))_{tors} &\cC_4 &\cC_6 &\cC_4& \cC_2 \times \cC_2 \\
\hline
\end{array}
$$

\

Theorem \ref{divpol} allows us to give a different proof of the following result mentioned by Gouv{\^e}a and Mazur \cite{GoveaMazur}.

\begin{theorem}
Given an elliptic curve $E$, defined over $\Q$, there is a finite amount of quadratic twists $E_D(\Q)$ such that $E_D(\Q)$ has points of order greater than $2$.

The number of such quadratic twists is bounded in terms of $G=E(\Q)_{tors}$ as in the previous result.
\end{theorem}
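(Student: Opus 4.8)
The plan is to reduce the statement to Theorem \ref{divpol} by showing that a quadratic twist $E_D(\Q)$ carrying a torsion point of order greater than $2$ forces the torsion subgroup to grow over the corresponding quadratic field $K = \Q(\sqrt{D})$. Concretely, I would establish the implication: if $E_D(\Q)$ contains a point of order $N > 2$, then $E(K)_{\tors} \neq E(\Q)_{\tors}$. Granting this, since distinct square-free integers $D \neq 1$ give distinct quadratic fields $K = \Q(\sqrt{D})$, and Theorem \ref{divpol} guarantees that only finitely many such $K$ satisfy $E(K)_{\tors} \neq E(\Q)_{\tors}$, the finiteness of the set of twists follows immediately. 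The bound in terms of $G = E(\Q)_{\tors}$ is then a direct consequence: the set of relevant $D$ embeds into the set of quadratic fields where the torsion grows, whose cardinality is at most $k_G$ by the previous corollary.

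For the central implication I would use the canonical isomorphism $\phi(\cdot, \sqrt{D}) \colon E_D(K) \to E(K)$ exhibited in the proof of Theorem \ref{teo3}. Writing the given point as $R = (x_R, y_R)$ with $x_R, y_R \in \Q$, its image $\phi(R, \sqrt{D}) = (x_R, y_R\sqrt{D})$ is a point of $E(K)$ of the same order $N$, since $\phi(\cdot, \sqrt{D})$ is a group isomorphism; in particular it is a nontrivial torsion point of $E(K)$. It then remains to check that this point is not already defined over $\Q$: as $R$ has order $N > 2$, it is neither $\cO$ nor a $2$-torsion point, so $y_R \neq 0$; and because $D$ is square-free and different from $1$ we have $\sqrt{D} \notin \Q$, whence $y_R\sqrt{D} \notin \Q$ and $\phi(R, \sqrt{D}) \in E(K) \setminus E(\Q)$. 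Thus $E(K)_{\tors}$ contains a torsion point not lying in $E(\Q)_{\tors}$, giving $E(\Q)_{\tors} \subsetneq E(K)_{\tors}$, as required.

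The step I would flag as requiring care, rather than a genuine obstacle, is the role of the hypothesis ``order greater than $2$'', which is exactly the right threshold. A $2$-torsion point $R = (x_R, 0) \in E_D(\Q)$ satisfies $x_R^3 + A x_R + B = 0$, so its image $\phi(R, \sqrt{D}) = (x_R, 0)$ already lies in $E(\Q)$ and contributes nothing new over $K$; the argument above breaks down precisely, and only, for such points. Points of order $> 2$, by contrast, are detected because their nonzero $y$-coordinate is multiplied by the irrational $\sqrt{D}$. I expect the proof to carry no serious difficulty beyond this bookkeeping, since its whole content is packaged in Theorem \ref{divpol} together with the explicit twisting isomorphism of Theorem \ref{teo3}. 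One could alternatively treat the odd-order case through Corollary \ref{cor4}, but the uniform argument via $\phi(\cdot, \sqrt{D})$ handles all orders $> 2$ at once.
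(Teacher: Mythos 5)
Your proposal is correct, and it reduces the statement to Theorem \ref{divpol} exactly as the paper does; the difference lies in how the reduction is carried out. The paper works with the homomorphism $\omega=\pi_2\circ\Psi\colon E(K)\to E_D(\Q)$ from Theorem \ref{teo3}, notes that $\ker(\omega)=E(\Q)$ and that $[2]P=\omega(\phi(P,\sqrt{D}))$ forces every nonzero element of $\coker(\omega)$ to have order $2$, and then argues in the forward direction: when $E(K)_{\tors}=E(\Q)_{\tors}$ the torsion image of $\omega$ is trivial, so $E_D(\Q)_{\tors}$ embeds in $\coker(\omega)$ and is killed by $2$. You instead prove the contrapositive directly: a point $R=(x_R,y_R)\in E_D(\Q)$ of order $N>2$ has $y_R\neq 0$, so $\phi(R,\sqrt{D})=(x_R,y_R\sqrt{D})$ is an order-$N$ torsion point of $E(K)$ not defined over $\Q$, whence the torsion grows over $K$. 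Your route is more elementary --- it needs only the twisting isomorphism and the observation that $2$-torsion is cut out by $y=0$, bypassing the exact sequence and the cokernel analysis entirely --- while the paper's version has the mild advantage of exhibiting the precise structural statement $E_D(\Q)_{\tors}\simeq\coker(\omega|_{\tors})$ for the stable twists. Both arguments locate the threshold ``order greater than $2$'' correctly (for you, via $y_R\neq 0$; for the paper, via the cokernel being $2$-torsion), and both inherit the bound $k_G$ from the corollary to Theorem \ref{divpol} in the same way. The only point worth a footnote in your write-up is the trivial twist $D=1$, which your argument (and the paper's) implicitly excludes by taking $D$ square-free and $\neq 1$.
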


\begin{proof}
Let us consider $K=\Q (\sqrt{D})$ and the morphism $\omega:E(K) \longrightarrow E_D(\Q)$, given by the composition
$$
E(K) \stackrel{\Psi}{\longrightarrow} E(\Q) \times E_D(\Q) 
\stackrel{\pi_2}{\longrightarrow} E_D(\Q),
$$
where $\Psi$ is the mapping given in Theorem 3. That is, 
$$
\omega(P) = \phi( P - \sigma P ,1/\sqrt{D}).
$$

It is clear that $\ker(\omega) = E(\Q)$ and, as in Theorem \ref{teo3}, if we consider the long exact sequence
$$
0 \to E(\Q) \longrightarrow E(K) \stackrel{\omega}{\longrightarrow} E_D(\Q) \longrightarrow \coker(\omega) \to 0,
$$

One can see that, for all $P \in E_D(\Q)$, $[2]P=\omega(\phi(P,\sqrt{D}))$. That is, any non--zero element of $\coker(\omega)$ has order $2$. Now, looking at the finite part, Theorem \ref{divpol} shows that for almost all $D$, we have $E(\Q)_{\tors} = E(K)_{\tors}$, so $\mbox{Img}(\omega)$ is trivial, and $E_D(\Q)_{\tors} \simeq \coker(\omega)$, which has only elements of order at most $2$.
\end{proof}

However, some questions arise in this context which we have not yet an answer to.

\noindent {\bf Problem 1.--} Let us fix $G \in \Phi(1)$. Consider the set of all elliptic curves $E$, defined over $\Q$, with $G = E(\Q)_{tors}$ and, when $E$ varies in this set, give a sharp bound for the maximal number of quadratic extensions $K/\Q$ with $G \neq E(K)_{\tors}$.

\noindent {\bf Problem 2.--} Is there a precise (and easy) description of which are the possible extensions $K/\Q$ with $E(\Q)_{\tors} \neq E(K)_{\tors}$, ideally in terms of some invariant(s) of the curve? 

\noindent {\bf Problem 3.--} For a given $G \in \Phi(1)$, one can see experimentally that not all subsets of $\Phi_\Q(2,G)$ appear when one considers an arbitrary curve $E$ with $G=E(\Q)_{\tors}$ and all quadratic extensions $K/\Q$. Find a precise description of all the combinations that may occur.

In connection to the last problem, one can prove that there are curves with stable torsion:

\begin{proposition}
Let $G \in \Phi(1)$, with $G \neq \cC_{2n}$. Then there exists an elliptic curve $E$ such that, for all quadratic extensions $K/\Q$,
$$
G = E(\Q)_{\tors} = E(K)_{\tors}.
$$
\end{proposition}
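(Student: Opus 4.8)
The plan is to go through the groups $G$ one at a time, exhibiting for each an explicit curve and certifying stability by the finitary criterion of Theorem \ref{divpol}, which reduces the verification to the finite list of \emph{dangerous} quadratic fields (the splitting fields attached to the factors of the division polynomials $\psi_m$, $m\in\{1,\dots,10,12,15,16\}$). First I would record the structural shortcuts coming from Theorem \ref{main}. One has $\Phi_\Q(2,G)=\{G\}$ for $G\in\{\cC_7,\cC_9,\cC_2\times\cC_8\}$, so for these three groups \emph{every} curve with $E(\Q)_{\tors}=G$ is already stable and there is nothing to arrange. For $G=\cC_5$ the only proper growth is to $\cC_{15}$, which by Theorem \ref{teo}(vi) occurs only for the four listed curves; any other curve carrying a rational $5$-torsion point is stable, since $\cC_5\times\cC_5$ cannot occur over a quadratic field by the Weil pairing (it would force $\Q(\zeta_5)\subseteq K$, of degree $4$).

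For the remaining odd groups $\cC_1$ and $\cC_3$ I would use that $\cC_2\not\subset G$ forces, via Theorem \ref{teo}(ii), that $E(K)$ never acquires a point of order $2$; hence by Corollary \ref{cor4} all growth is detected on the twists through $E(K)[p]\simeq E(\Q)[p]\times E_D(\Q)[p]$, and it suffices to arrange that no quadratic twist gains odd torsion. I would control this through the mod-$p$ Galois image: when the representation on $E[p]$ is surjective, $\psi_p$ is irreducible of degree $(p^2-1)/2\ge 4$, so no point of order $p$ is rational over a quadratic field. Thus for $G=\cC_1$ a curve with surjective mod-$3$, mod-$5$ and mod-$7$ representations gains no odd torsion (order $9$ requires order $3$ first) and is stable; such curves are abundant. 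For $G=\cC_3$ the mod-$3$ image is constrained to a Borel, so instead I only need $\Q(E[3])\ne\Q(\sqrt{-3})$, which rules out the growth $\cC_3\times\cC_3$ (the only remaining possibility once the finitely many $\cC_{15}$-curves are excluded); this holds whenever the Borel image properly contains the order-$2$ diagonal, and one picks such a curve.

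For the non-cyclic groups I would lean on Kwon's description (\ref{eqkwon}). The case $\cC_2\times\cC_8$ is already covered. For $\cC_2\times\cC_4$ and $\cC_2\times\cC_6$ the admissible growths are the $2$-power ones of (\ref{eqkwon}) (to $\cC_2\times\cC_8$ or $\cC_4\times\cC_4$, resp.\ to $\cC_2\times\cC_{12}$), and Kwon gives necessary and sufficient conditions on the Weierstrass coefficients and on $\mathrm{disc}(K)$ for each to occur; choosing coefficients that violate all of them yields an explicit stable curve.

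The main obstacle is $G=\cC_2\times\cC_2$, the one case for which Kwon supplies no clean criterion. Writing $E:y^2=(x-e_1)(x-e_2)(x-e_3)$ with $e_i\in\Q$, a point of order $4$ over $K=\Q(\sqrt D)$ doubling to $(e_i,0)$ exists precisely when $e_i-e_j$ and $e_i-e_k$ both become squares in $K$, i.e.\ when they lie in the same class of $\Q^*/(\Q^*)^2$ (the class of $D$). I would therefore choose $e_1,e_2,e_3$ so that, for each $i$, the two differences $e_i-e_j$ and $e_i-e_k$ represent distinct nontrivial classes in $\Q^*/(\Q^*)^2$; this blocks order-$4$ points over every quadratic field and with them the growths to $\cC_2\times\cC_4$, $\cC_2\times\cC_8$ and $\cC_2\times\cC_{12}$ (each of which contains an element of order $4$). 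To block the last possibility $\cC_2\times\cC_6$, which is the acquisition of a $3$-torsion point, I would additionally impose a large mod-$3$ image as above. The delicate point throughout is that the $2$-power and the odd obstructions must be met \emph{simultaneously} by a single curve; the general-position viewpoint (concretely, Hilbert irreducibility applied to the family $y^2=(x-e_1)(x-e_2)(x-e_3)$) guarantees that these finitely many open conditions are compatible, while an explicit curve from the tables makes the construction effective and the final application of Theorem \ref{divpol} a finite check.
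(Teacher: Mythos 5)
Your argument is correct in outline, but it takes a genuinely different route from the paper. The paper's own proof is essentially a two-line verification: it notes that the cyclic even-order groups fail (full $2$--torsion always appears over the splitting field of the quadratic factor of the cubic), and for every other $G$ it simply points to an explicit witness in Table \ref{table2} (\texttt{11a2}, \texttt{19a3}, \texttt{11a1}, \texttt{26b1}, \texttt{54b3}, \texttt{120b2}, \texttt{24a1}, \texttt{30a2}, \texttt{210e2}), whose stability is certified by the finite division-polynomial check underlying Theorem \ref{divpol} and the exhaustive computation over the Cremona tables. You instead give structural sufficient conditions for stability group by group: the trivial cases where $\Phi_\Q(2,G)=\{G\}$, the finiteness of the $\cC_{15}$ phenomenon for $\cC_5$ and $\cC_3$, large mod-$p$ image for the odd cyclic cases combined with Theorem \ref{teo}(ii) to kill $2$--torsion growth, Kwon's criteria for $\cC_2\times\cC_4$ and $\cC_2\times\cC_6$, and a square-class condition on the $e_i-e_j$ to block order-$4$ points for $\cC_2\times\cC_2$ (which is correct: both differences from $e_i$ become squares in $\Q(\sqrt{D})$ only if they lie in the square classes of $1$ or $D$, which your hypothesis forbids). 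What your approach buys is independence from the machine computation and the insight that stable curves are in fact generic in each family; what it costs is explicitness — the existence of a curve with trivial torsion and surjective mod-$3$, $5$, $7$ image, of coefficients violating all of Kwon's conditions simultaneously, and of a $\cC_2\times\cC_2$ curve meeting the square-class and mod-$3$ conditions at once are all true and standard, but you assert them rather than exhibit witnesses (and the appeal to Hilbert irreducibility is heavier than needed: explicit choices such as $e_1=0$, $e_2=-2$, $e_3=-5$ already satisfy your square-class condition). If you supply one explicit curve per case, your proof stands on its own; as written, the existence claims would need either a citation (Serre's open image theorem, Kwon's tables) or a pointer back to the same computational data the paper uses.
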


\begin{proof}
Clearly the groups $G=\cC_{2n}$ do not satisfy this property, as there is a quadratic extension where full $2$--torsion is achieved. 

The fact that, for all other groups, there are curves with stable torsion for all quadratic extensions can be checked in Table \ref{table2}.
\end{proof}

Finally, some comments about the other possible strategy mentioned at the beginning of the section. If we fix the field, then a thorough study of possible groups is possible \cite{KamiennyNajman} studying the non--cuspidal points of certain modular curves.

This technique was in fact the main tool in the search (and hunt) for the most unusual group appearing in $\Phi_\Q(2)$, which is $\cC_{15}$. As we mentioned before, it only appears in $4$ very specific cases \cite{Najman}.

\section{Computations}\label{appendix}

\begin{table}[ht]
\caption{(See Section 5 for a precise explanation)}\label{table2}
\begin{tabular}{|c|c|c|c|c|c||c|c|}
\hline
\multirow{5}{*}{$\cC_1$} & $\cC_1$ &   \multicolumn{4}{|c||}{\texttt{11a2} } &  \multicolumn{2}{|c|}{898000} \\
\cline{2-6}\cline{7-8}
& $\cC_3$  & \texttt{50b3} & 5 & \texttt{19a2} &  -3 & 38916 & 72257\\
\cline{2-6}\cline{7-8}
&  $\cC_5$ & \texttt{50a4} & 5 & \texttt{99d1} & -3 & 1581 &  2261\\
\cline{2-6}\cline{7-8}
&  $\cC_7$ & \texttt{338f1} &  13& \texttt{208d1} &  -1 & 229 & 295\\
\cline{2-6}\cline{7-8}
& $\cC_9$ & \texttt{432e3} &  3 & \texttt{54a2}  &  -3 & 87 & 105\\
\hline
\multirow{9}{*}{$\cC_2$} & $\cC_4$ & \texttt{15a6} & 5 & \texttt{15a5} & -1 & 105300 & 119253\\
\cline{2-6}\cline{7-8}
& $\cC_6$ & \texttt{80b3} &  3 & \texttt{14a3} & -3 & 10594 & 15658\\
\cline{2-6}\cline{7-8}
& $\cC_8$  & \texttt{72a5} & 3 & \texttt{24a6} &  -1 & 1026 & 1014\\
\cline{2-6}\cline{7-8}
 & $\cC_{10}$  & \texttt{150b3} & 5 & \texttt{198e1} & -3 & 202 & 234\\
\cline{2-6}\cline{7-8}
 & $\cC_{12} $& \texttt{240b3} & 3 & \texttt{30a3} & -3 & 99 & 119\\
\cline{2-6}\cline{7-8}
 & $\cC_{16}$  & \texttt{22050eo1} & 105 & \texttt{3150bk1} & -15 & 3 & 1\\
\cline{2-6}\cline{7-8}
 & $\cC_2\times\cC_2$ & \texttt{14a5} & 2 & \texttt{14a3} &  -7 & 488583 & 256109\\
\cline{2-6}\cline{7-8}
 & $\cC_2\times\cC_6$ & \texttt{100a1} & 5 & \texttt{36a3} &  -3 & 322 & 257\\
\cline{2-6}\cline{7-8}
 & $\cC_2\times\cC_{10}$ & \texttt{2178m1} & 33 & \texttt{450a3}&  -15 & 6 & 4\\
\hline 
\multirow{3}{*}{$\cC_3$} & $\cC_3$ &   \multicolumn{4}{|c||}{\texttt{19a3} }&  \multicolumn{2}{|c|}{33340} \\
\cline{2-6}\cline{7-8}
 & $\cC_{15}$  & \texttt{50a3} & 5 & \texttt{450b4} & -15 & 1 &  1\\
\cline{2-6}\cline{7-8}
  & $\cC_3\times\cC_3$ & $-$ & $-$ & \texttt{19a1} &  -3 & $-$ & 1710\\
\hline
\multirow{6}{*}{$\cC_4$} & $\cC_8$ & \texttt{15a7} &  3 & \texttt{15a8} & -3 & 2403 &	 1244\\
\cline{2-6}\cline{7-8}
 & $\cC_{12}$ & \texttt{150c1}&  5 & \texttt{90c1} &  -3& 56 & 72\\
\cline{2-6}\cline{7-8}
 & $\cC_2\times \cC_{4}$ & \texttt{15a7} &  15 & \texttt{15a8} & -15 & 13990 & 9271\\
\cline{2-6}\cline{7-8}
 & $\cC_2\times \cC_{8}$ & \texttt{1344m5} & 2 & \texttt{192c6} &  -2 & 11 & 18\\
\cline{2-6}\cline{7-8}
 & $\cC_2\times \cC_{12}$ & \texttt{112710cj1} & 17 & \texttt{150c3} & -15 & 3 & 2\\
\cline{2-6}\cline{7-8}
 & $\cC_4\times \cC_{4}$  & $-$ & $-$ & \texttt{40a4} & -1 & $-$ &  56\\
\hline
 \multirow{2}{*}{$\cC_5$} & $\cC_5$ &   \multicolumn{4}{|c||}{\texttt{11a1} } &   \multicolumn{2}{|c|}{ 1127} \\
\cline{2-6}\cline{7-8}
 & $\cC_{15}$ & \texttt{50b1} &  5 & \texttt{50b2} &  -15 & 1 & 1 \\
 \hline
 \multirow{3}{*}{$\cC_6$} & $\cC_{12}$ & \texttt{30a1} & 5& \texttt{30a1} & -3 & 157 & 167\\
\cline{2-6}\cline{7-8}
 &  $\cC_2\times \cC_{6}$  & \texttt{14a2} & 2 & \texttt{14a1} &  -7 & 3431 & 1652\\
\cline{2-6}\cline{7-8}
 & $\cC_3\times \cC_{6}$  & $-$ & $-$ & \texttt{14a1} & -3 & - & 64\\
 \hline
 $\cC_7$ & $\cC_7$ &   \multicolumn{4}{|c||}{\texttt{26b1} }&   \multicolumn{2}{|c|}{ 66} \\
 \hline
 \multirow{2}{*}{$\cC_8$} & $\cC_{16}$ & \texttt{210e1} & 105 &  \texttt{210e1} & -15 & 12 & 6\\
\cline{2-6}\cline{7-8}
 & $\cC_2\times \cC_{8}$ & \texttt{21a3} & 7 & \texttt{15a4} &  -1 & 85 & 64\\
 \hline
 $\cC_9$ & $\cC_9$ &   \multicolumn{4}{|c||}{\texttt{54b3} } & \multicolumn{2}{|c|}{17} \\
 \hline
 $\cC_{10}$ & $\cC_2\times \cC_{10}$  & \texttt{66c1} &  33 & \texttt{66c2} &  -2 & 25 & 11\\
 \hline
 $\cC_{12}$ & $\cC_2\times \cC_{12}$ & \texttt{2730bd1} & 65 & \texttt{90c3}, & -15 & 10 & 5\\
 \hline
 \multirow{5}{*}{$\cC_2\times \cC_{2}$} & $\cC_2\times \cC_{2}$ &   \multicolumn{4}{|c||}{\texttt{120b2} } & \multicolumn{2}{|c|}{36913} \\
\cline{2-6}\cline{7-8}
 & $\cC_2\times \cC_{4}$  & \texttt{15a2} & 5 & \texttt{15a2} &  -1 & 17911 & 12914\\
\cline{2-6}\cline{7-8}
 & $\cC_2\times \cC_{6}$ & \texttt{150c2} &  5 & \texttt{30a6} & -3 & 370 & 459\\
\cline{2-6}\cline{7-8}
 &  $\cC_2\times \cC_{8}$  & \texttt{72a4} &  3 & \texttt{63a2} &  -3 & 61 & 47\\
\cline{2-6}\cline{7-8}
 & $\cC_2\times \cC_{12}$  & \texttt{960o6} &  6 & \texttt{167310w2} & -39 & 4 & 1\\
\hline
 \multirow{3}{*}{$\cC_2\times \cC_{4}$} & $\cC_2\times \cC_{4}$ &   \multicolumn{4}{|c||}{\texttt{24a1} } & \multicolumn{2}{|c|}{ 1054} \\
 \cline{2-6}\cline{7-8}
 &  $\cC_2\times \cC_{8}$  & \texttt{15a1} &  5 & \texttt{21a1} & -3 & 146 & 61\\
\cline{2-6}\cline{7-8}
 &  $\cC_4\times \cC_{4}$ & $-$ & $-$ & \texttt{15a1} & -1 & $-$ & 64\\
 \hline
 \multirow{2}{*}{$\cC_2\times \cC_{6}$} & $\cC_2\times \cC_{6}$ &   \multicolumn{4}{|c||}{\texttt{30a2} } &   \multicolumn{2}{|c|}{71 } \\
\cline{2-6}\cline{7-8}
& $\cC_2\times \cC_{12}$  & \texttt{90c6} &  6 & \texttt{2730bd2} & -14 & 8 & 7\\
\hline
$\cC_2\times \cC_{8}$ & $\cC_2\times \cC_{8}$ &   \multicolumn{4}{|c||}{\texttt{210e2} } &   \multicolumn{2}{|c|}{6 } \\
\hline
\end{tabular}
\end{table}

Table \ref{table2} is the result of our computations with the curves in the Antwerp--Cremona tables, with conductor less than $300000$ (a total of $1887909$ elliptic curves) must be read as follows:

\begin{enumerate}
\item The first column is $G \in \Phi (1)$.
\item The second column is $H \in \Phi_2(\Q,G)$.
\item Columns $3$rd to $6$th display specific examples: 

\begin{itemize}
\item The third column is the elliptic curve $E$ with minimal conductor such that $E(\Q)_{\tors}\simeq G$, $E(K)_{\tors}\simeq H$, with $K=\Q (\sqrt{D})$, $D>0$ being the integer in the fourth column.
\item Columns $5$th and $6$th are analogous, with $D<0$.
\item When these four colums are merged, $H=G$ and the curve in the cell verifies $E(\Q)_{\tors} = E(K)_{\tors}$ for all quadratic extensions $K/\Q$.
\end{itemize}
\item Columns $7$th and $8$th indicate the total amount of curves found verifying the corresponding situation ($7$th for real extensions, $8$th for complex, merged for stable torsion groups).
\end{enumerate}

\

{\bf Acknowledgement:} The authors would like to thank J. Silverman and N. Elkies for their suggestions and ideas.

%\clearpage


\begin{thebibliography}{99}

\bibitem{antwerp}
Birch, B. J.; Kuyk, W. (eds.): Modular Functions of One Variable IV. Lecture Notes in Mathematics {\bf 476}. Springer (1975).

\bibitem{Chahal}
Chahal, J.S.: {\em A note on the rank of quadratic twists of an elliptic curve}. Math. Nach. {\bf 161} (1993) 55--58.

\bibitem{cremonaweb}
Cremona, J. E.: {\em Elliptic curve data for conductors up to 300.000.} Available on ...
\verb|http://www.warwick.ac.uk/~masgaj/ftp/data/|, 2013.

\bibitem{Derickx}
Derickx, M., Kamienny, S., Stein, W., Stoll, M.: {\em Torsion points on elliptic curves over number fields of small degree}. In preparation.

\bibitem{Edixhoven}
Edixhoven, B.: {\em Rational torsion points on elliptic curves over number fields (after Kamienny and Mazur)}. S\'eminaire Bourbaki 1993/94. 
Ast\'erisque {\bf 227} (1995) 209--227. 

\bibitem{Fujita}
Fujita, Y.: {\em Torsion subgroups of elliptic curves in elementary $2$--extensions of $\Q$}. J. Number Theory {\bf 144} (2005) 124--134.

\bibitem{GGT}
Garc\'{\i}a--Selfa, I.; Gonz\'alez--Jim\'enez, E.; Tornero, J.M.: {\em Galois theory, discriminants and torsion subgroup of elliptic curves.} J. Pure Appl. Algebra {\bf 214} (2010) 1340--1346.

\bibitem{GoveaMazur}
Gouv{\^e}a, F.; Mazur, B.: {\em The square-free sieve and the rank of elliptic curves.} J. Amer. Math. Soc. {\bf 4} (1991) 1--23. 

\bibitem{Jeon}
Jeon, D; Kim,C. H.; Park, E.: {\em On the torsion of elliptic curves over quartic number fields}, J. London Math. Soc. {\bf 74} (2006) 1--12.


\bibitem{Kamienny}
Kamienny, S.: {\em Torsion points on elliptic curves and $q$--coefficients of modular forms}. Invent. Math. {\bf 109} (1992) 129--133.

\bibitem{KamiennyNajman}
Kamienny, S.; Najman, F.: {\em Torsion groups of elliptic curves over quadratic fields.} Acta Arith. {\bf 152} (2012) 291--305. 

\bibitem{KenkuMomose}
Kenku, M.A.; Momose, F.: {\em Torsion points on elliptic curves defined over quadratic fields}. Nagoya Math. J. {\bf 109} (1988) 125--149.

\bibitem{Knapp}
Knapp, A.W.: {\em Elliptic curves}. Princeton University Press (1992).

\bibitem{Kwon}
Kwon, S.: {\em Torsion subgroups of elliptic curves over quadratic extensions}. J. Number Theory {\bf 62} (1997) 144--162.

\bibitem{LaskaLorenz}
Laska, M.; Lorenz, M.: {\em Rational points on elliptic curves over $\Q$ in elementary abelian $2$--extensions of $\Q$}. J. Reine Angew. Math. {\bf 335} (1985) 163--172.

\bibitem{Lozano}
Lozano--Robledo, A.: {\em On the field of definition of p-torsion points on elliptic curves over the rationals}.  Math. Ann. {\bf 357} (2013) 279--305. 

\bibitem{Mazur1}
Mazur, B.: {\em Modular curves and the Eisenstein ideal}. Publ. Math. Inst. Hautes \'Etudes. Sci. {\bf 47} (1977) 33--186.

\bibitem{Mazur2}
Mazur, B.: {\em Rational isogenies of prime degree}. Invent. Math. {\bf 44} (1978) 129--162.

\bibitem{Merel}
Merel, L.: {\em Bornes pour la torsion des courbes elliptiques sur les corps de nombres}. Invent. Math. {\bf 124} (1996), 437--449.

\bibitem{Najman}
Najman, F.: {\em Torsion of rational elliptic curves over cubic fields and sporadic points on $X_1(n)$}. arXiv: 1211.2188.

\bibitem{Palladino}
Palladino, L.: {\em Elliptic curves with $\Q({\mathcal E}[3]) = \Q (\zeta_3)$ and counterexamples to local--global divisibility by $9$}. J. Th\'eor. Nombres Bordeaux {\bf 22} (2010) 139--160.

\bibitem{Parent}
Parent, P.: {\em No 17-torsion on elliptic curves over cubic number fields}, J. Th\'eor. Nombres Bordeaux {\bf 15} (2003), 831--838.

\bibitem{Parent2}
Parent, P.: {\em Bornes effectives pour la torsion des courbes elliptiques sur les corps de nombres}, J. Reine Angew. Math. {\bf 506} (1999), 85--116.

\bibitem{Silverman}
Silverman, J.H.: {\em The arithmetic of elliptic curves}. Springer (1986).

\end{thebibliography}
\end{document}